\newtheorem{theorem}{Theorem}[section]
\newtheorem{corollary}{Corollary}[section]
\newtheorem{definition}{Definition}
\newtheorem*{theorem*}{Theorem}
\newtheorem*{remark*}{Remark}
\newtheorem*{problem*}{Problem}
\newtheorem*{conjecture*}{Conjecture}
\newtheorem{lemma}[theorem]{Lemma}
\newcommand{\SSS}{\mathcal{S}}
\renewcommand{\P}{{\mathcal P}}
\renewcommand{\mod}{{\rm mod\, }}
\title[An effective bound on Generalized Diophantine $m$-tuples]{An effective bound on Generalized Diophantine $m$-tuples}
\author{Saunak Bhattacharjee}
\author{Anup B. Dixit}
\author{Dishant Saikia}
\address{IISER Tirupati, C/O Sree Rama Engineering College (Transit Campus), Tirupati, Andhra Pradesh, India 517507.}
\email{saunakbhattacharjee@students.iisertirupati.ac.in}
\address{Institute of Mathematical Sciences, CIT Campus, Taramani, Chennai, Tamil Nadu, India 600113.}
\email{anupdixit@imsc.res.in}
\address{Freie Universität Berlin,  Kaiserswerther Str. 16-18, Berlin, Germany 14195.}
\email{saikiadishant@gmail.com}
\subjclass[2010]{11D45, 11D72, 11N36}
\keywords{Diophantine m-tuples, Gallagher's sieve, gap principle, quantitative Roth's theorem}
\thanks{The research of the second author is partially supported by the Inspire Faculty Fellowship. The research of the first and third authors were supported by a summer research program in IMSc, Chennai.}
\begin{document}

\date{\today}
\maketitle

\begin{abstract}
For non-zero integers $n$ and $k\geq2$, a generalized Diophantine $m$-tuple with property $D_k(n)$ is a set of $m$ positive integers $S = \{a_1,a_2,\ldots, a_m\}$ such that $a_ia_j + n$ is a $k$-th power for $1\leq i< j\leq m$. 
Define $M_k(n):= \sup\{|S| : S$ has property $D_k(n)\}$. In a recent work, the second author, S. Kim and M. R. Murty proved that $M_k(n)$ is $O(\log n)$, for a fixed $k$, as we vary $n$. In this paper, we obtain effective upper bounds on $M_k(n)$. In particular, we show that for $k\geq 2$, $M_k(n) \leq 3\,\phi(k)\, \log n$, if $n$ is sufficiently larger than $k$.
\end{abstract}
\bigskip

\section{\bf Introduction}
\medskip

Given a non-zero integer $n$, we say a set of natural numbers $S=\{a_1,a_2,\ldots, a_m\}$ is a Diophantine $m$-tuple with property $D(n)$ if $a_ia_j +n$ is a perfect square for all $1\leq i<j\leq m$. Diophantus first studied such sets of numbers and found the quadruple $\{1,33,68,105\}$ with property $D(256)$. The first $D(1)$-quadruple $\{1,3,8,120\}$ was discovered by Fermat, and this was later generalized by Euler, who found the following infinite family of quadruples with property $D(1)$, namely
\begin{equation*}
    \{a, b, a + b + 2r, 4r(r + a)(r + b) \},
\end{equation*}
where $ab + 1 = r^2$. In fact, it is known that any $D(1)$-triple can be extended to a Diophantine quadruple \cite{AHS}. In 1969, using Baker's theory of linear forms in logarithm of algebraic numbers and a reduction method based on continued fractions, Baker and Davenport \cite{Baker-Davenport} proved that Fermat's example is the only extension of $\{1,3,8\}$ with property $D(1)$. In 2004, Dujella \cite{Dujella2004}, using similar methods, proved that there are no $D(1)$-sextuples and there are only finitely many $D(1)$-quintuples, if any. The conjecture on the non-existence of $D(1)$-quintuples was finally settled in $2019$ by He, Togb\'e, and Ziegler in \cite{HTZ}.\\

However, in general, there are $D(n)$-quintuples for $n\neq 1$. For instance, $$\{1, 33, 105, 320, 18240\}\quad \text{ and} \quad  \{5, 21, 64, 285, 6720\}$$  are Diophantine quintuples satisfying property $D(256)$. Also, we note that there are known examples of $D(n)$-sextuples, but no $D(n)$-septuple is known. So, it is natural to study the size of the largest tuple with property $D(n)$. Define
\begin{equation*}
    M_n := \sup\{|S| : S \text{ satisfies property } D(n)\}.
\end{equation*}
In 2004, Dujella \cite{dujella} showed that 
\begin{equation*}
    M_n \leq C \log |n|,
\end{equation*}
where $C$ is an absolute constant. He also showed that for $n>10^{100}$, one can choose $C=8.37$. This constant was improved by Becker and Murty \cite{Becker-Murty}, who showed that for any $n$,
\begin{equation}\label{becker-ram}
    M_n \leq 2.6071 \log |n| + \mathcal{O}\left(\frac{\log |n|}{(\log\log |n|)^2}\right).
\end{equation}
\par
Our goal is to study this problem when squares are replaced by higher powers.
\begin{definition}[generalized Diophantine $m$-tuples]  Fix a natural number $k\geq 2$.  
A set of natural numbers $S=\{a_1,a_2,\ldots, a_m\}$ is said to satisfy property $D_k(n)$ if $a_ia_j +n$ is a $k$-th power for all $1\leq i<j\leq m$.
\end{definition}
We analogously define the following quantity for each non-zero integer $n$,
\begin{equation*}
    M_k(n):= \sup\{|S| : S \text{ satisfies property }D_k(n)\}.
\end{equation*}
For $k\geq 3$ and $m\geq 3$, we can apply the celebrated theorem of Faltings \cite{faltings} to deduce that a superelliptic curve of the form
\begin{equation*}
    y^k = f(x)=(a_1 x +n)(a_2 x +n) (a_3 x+n)(a_4 x + n)~\cdots~ (a_m x +n)
\end{equation*}
has only finitely many rational points and a fortiori, finitely many integral points. Therefore, a set $S$ satisfying property $D_k(n)$ must be finite. When $n=1$, Bugeaud and Dujella \cite{Bugeaud-Dujella} showed that
\begin{equation*}
    M_{3}(1) \leq 7, \quad M_{4}(1) \leq 5,  \quad M_{k}(1) \leq 4 ~  \text{for}~ 5 \leq k \leq 176,~ \text{and}~ M_{k}(1) \leq 3~ \text{for}~ k \geq 177.
\end{equation*}
In other words, the size of $D_k(1)$-tuples is bounded by $3$ for large enough $k$. In the general case, for any $n \neq 0$ and $k \geq 3$, B\'{e}rczes, Dujella, Hajdu and Luca \cite{dujella-luca} obtained upper bounds for $M_k(n)$. In particular, they showed that for $k\geq 5$
\begin{equation*}
    M_k(n) \leq 2|n|^5 + 3.
\end{equation*}
\medskip

In \cite{dixit-kim-murty}, the second author, S. Kim and M. R. Murty improved the above bounds on $M_k(n)$ for large $n$ and a fixed $k$. Define
\begin{equation*}
    M_k(n;L):=\sup\{|S \cap [n^L,\infty)|: S \text{ satisfies property } D_k(n)\}
\end{equation*}
They proved that for $k\geq 3$, as $n\to\infty$,
\begin{equation}\label{dixit-kim-murty}
    M_k(n,L)\ll_{k,L} 1, \text{ for }L\geq 3 \,\,\,\, \text{ and }\,\,\,\, M_k(n) \ll_k \log n.
\end{equation}
The purpose of this paper is to explicitly obtain the implied constants in \eqref{dixit-kim-murty}. In \cite{dixit-kim-murty}, the bounds for $M_k(n)$ were proven under the further assumption that $n>0$. However, this is not necessary, as was remarked in \cite{dixit-kim-murty}, but an argument was not provided. We first prove the bounds \eqref{dixit-kim-murty} for all non-zero integers $n$.

\begin{theorem}\label{thm-2}
    Let $k \geq 3$ be an integer. Then the following holds as $|n| \to \infty$:
    \begin{enumerate}
        \item For $L \geq 3$,
        $$M_k(n, L) \ll 1,$$
        where the implied constant depends on $k$ and $L$, but is independent of $n$.
        \medskip
        
        \item Moreover,
        $$M_k(n) \ll \log |n|$$
        where the implied constant depends on $k$.
    \end{enumerate}
\end{theorem}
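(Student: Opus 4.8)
The plan is to follow the strategy of \cite{dixit-kim-murty}, which establishes both assertions for $n>0$, and to check that every ingredient is insensitive to the sign of $n$; since the positive case is known, we may assume $n<0$ and write $N=|n|$. Fix a set $S$ with property $D_k(n)$ and decompose $S=S_{\mathrm{sm}}\cup S_{\mathrm{lg}}$, where $S_{\mathrm{lg}}=S\cap[N^{L},\infty)$, taking $L\ge 3$ in part (1) and $L=3$ in part (2). Part (1) is then exactly the bound $|S_{\mathrm{lg}}|\ll_{k,L}1$, and part (2) follows by combining it with $|S_{\mathrm{sm}}|=|S\cap[1,N^{3}]|\ll_{k}\log N$.

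For $S_{\mathrm{lg}}$ I would import the gap-principle argument of \cite{dixit-kim-murty} essentially verbatim. Taking a fixed pivot $a_{0}\in S$ together with two further elements $b<c$ of $S_{\mathrm{lg}}$, the relations $a_{0}b+n=x^{k}$, $a_{0}c+n=y^{k}$, $bc+n=z^{k}$ combine into Thue-type relations whose integer solutions, as $b$ and $c$ vary, must grow geometrically in size (the gap principle); a quantitative form of Roth's theorem then bounds the number of solutions above the relevant threshold — hence $|S_{\mathrm{lg}}|$ — by a constant depending only on $k$ and $L$, the hypothesis $L\ge 3$ being what guarantees that the induced Diophantine approximations exceed the Roth threshold. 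The only point deserving attention when $n<0$ is that quantities such as $a_{0}b+n$ or $bc+n$ could a priori be negative; but on $S_{\mathrm{lg}}$ every element is $\ge N^{3}\gg N^{1/2}$, so these products exceed $N$ and are non-negative when $k$ is even, while for $k$ odd a negative value is still a genuine $k$-th power and the identities hold with signs tracked — in either case the approximation estimates are unchanged.

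For $S_{\mathrm{sm}}$ I would run Gallagher's sieve exactly as in \cite{dixit-kim-murty}. Fix a prime $\ell\mid k$ and a pivot $a_{0}\in S$; for every prime $p\equiv 1\pmod{\ell}$ with $p\nmid a_{0}$, each element $a_{j}\in S$ has $a_{0}a_{j}+n$ equal to a $k$-th power, hence to an $\ell$-th power residue modulo $p$, so the reduction of $a_{j}$ modulo $p$ lies in a set of size at most $(p-1)/\ell+1$. Inserting these density bounds into Gallagher's sieve inequality and summing over primes $p\equiv 1\pmod{\ell}$ up to a suitable threshold produces $|S\cap[1,x]|\ll_{k}\log x$, and $x=N^{3}$ gives $|S_{\mathrm{sm}}|\ll_{k}\log N$. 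Negativity of $n$ is harmless here: the set of $t\bmod p$ for which $a_{0}t+n$ is an $\ell$-th power residue is still a translate of a union of cosets of the subgroup of $\ell$-th powers in $(\Z/p\Z)^{\times}$, of the same cardinality, and the at most one element of $S$ below $N^{1/2}$ (the one truly sign-sensitive feature, occurring only when $k$ is even, since two elements smaller than $N^{1/2}$ would make their product-plus-$n$ negative) costs only an additive $O(1)$.

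The main obstacle is therefore not the discovery of a new argument but the disciplined bookkeeping required to confirm that each sign-dependent step of \cite{dixit-kim-murty} — positivity of the quantities fed to Roth's theorem, the direction of the gap-principle inequalities, and the precise shape of the admissible residue sets in Gallagher's sieve — goes through for $n<0$; once these checks are carried out, the estimates \eqref{dixit-kim-murty} hold for every non-zero integer $n$, with the stated dependence of the implied constants.
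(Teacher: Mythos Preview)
Your overall plan matches the paper's: split into small and large elements, sieve the small ones, and use a gap principle together with a quantitative Roth theorem for the large ones. The one place where you undersell the work is the gap principle for $D_k(-n)$: it is not the positive-$n$ statement with signs tracked. In the proof of the positive case (Lemma~\ref{gyar}) the key step uses that the products $(ad+n)(bc+n)$ and $(ac+n)(bd+n)$ both dominate $abcd$; for $D_k(-n)$ one instead needs the new lower bound $(ac-n)(bd-n)\geq \tfrac{1}{2}abcd$, valid once $n^{3}\leq a<b<c<d$ (this is Lemma~\ref{abcd-lemma}), which then yields the modified gap inequality $bd\geq k^{k}2^{-k}n^{-k}(ac)^{k-1}$ of Lemma~\ref{gap} with an extra factor $2^{-k}$. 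This is elementary, but it is the one genuinely new ingredient rather than mere bookkeeping; your positivity remark (that products of elements of $S_{\mathrm{lg}}$ exceed $|n|$) is correct but does not by itself salvage the argument.

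Your sieve for the small range also differs from the paper's: you fix a pivot $a_{0}$ and confine each $a_{j}$ to the preimage of the $\ell$-th power residues modulo $p$, giving $|S_{p}|\leq (p-1)/\ell+O(1)$, whereas the paper applies Vinogradov's character-sum bound (Lemma~\ref{vinogradov}) directly to $S_{p}\times S_{p}$ and obtains the sharper $|S_{p}|\leq \sqrt{p}+2$. For the qualitative bound $M_k(n)\ll_{k}\log|n|$ either route suffices, and yours is arguably more elementary; the Vinogradov bound is what drives the explicit constant $3\,\phi(k)$ in Theorem~\ref{thm-1}(b).
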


We now state our main theorem, which is the effective version of Theorem \ref{thm-2}.

\begin{theorem}\label{thm-1}
    Let $k$ be a positive integer $\geq 3$. Then the following holds.
    \begin{enumerate}
        \item[(a)] For $L \geq 3$,
        \begin{equation}\label{abs-large}
        M_k(n, L) \leq 2^{28} \log (2k) \log (2 \log (2k)) + 14.
        \end{equation}
    
       \medskip

        \item[(b)] Suppose $n$ and $k$ vary such that as $|n|\to \infty$ and
        $$
            k = o(\log\log |n|).
        $$
        Then
        $$
        M_k(n) \leq 3 \, \phi(k)\, \log |n| + \mathcal{O} \left(\frac{ (\phi(k))^2\log |n|}{\log \log |n|}\right),
        $$
        where $\phi(n)$ denotes the Euler totient function.
    \end{enumerate}
\end{theorem}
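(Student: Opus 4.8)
The plan is to bound $|S|$ for an arbitrary set $S$ with property $D_k(n)$ by splitting it at $|n|^{3}$: write $S = S_{\mathrm{sm}} \sqcup S_{\mathrm{bg}}$ with $S_{\mathrm{bg}} = S \cap [\,|n|^{3},\infty)$ and $S_{\mathrm{sm}}$ the rest. For $S_{\mathrm{bg}}$ one simply invokes part (a): $|S_{\mathrm{bg}}| \le M_k(n,3) \le 2^{28}\log(2k)\log(2\log(2k)) + 14$, and since $k = o(\log\log|n|)$ this is $o(\log|n|)$, hence swallowed by the error term. Everything therefore reduces to proving $|S_{\mathrm{sm}}| \le 3\,\phi(k)\log|n| + O\!\bigl(\phi(k)^{2}\log|n|/\log\log|n|\bigr)$; note that $S_{\mathrm{sm}} \subseteq \{1,\dots,N\}$ with $N = |n|^{3}$, so the length parameter fed to the sieve has $\log N = 3\log|n|$ --- this ``$3$'' will be the source of the leading constant.

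For $S_{\mathrm{sm}}$ the instrument is Gallagher's larger sieve, and the crux is the residue-class count $\omega(p)$. Fixing any $a_{1}\in S$, every other element is congruent mod $p$ to $a_{1}^{-1}(t^{k}-n)$ for some $t$ (whenever $p\nmid n a_{1}$), so $S_{\mathrm{sm}}$ lies in at most $\tfrac{p-1}{\gcd(k,p-1)}+1$ residue classes mod $p$; restricting to primes $p\equiv 1\pmod k$ reduces this to $\tfrac{p-1}{k}+1$. On its own this only yields the useless bound $|S_{\mathrm{sm}}| \ll_{k} |n|^{\phi(k)/k}$, so one must do far better for the primes that actually enter the sieve. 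The point is that $S\bmod p$ is itself a Diophantine $k$-tuple over $\mathbb{F}_{p}$ --- all pairwise products plus $n$ are $k$-th power residues --- and a Weil-type character-sum estimate (with $p\equiv 1\pmod k$, so the $k$-th powers form the full index-$k$ subgroup) shows $S\bmod p$ meets at most $(1+o(1))\sqrt{p}$ classes. Taking $\omega(p)$ to be the smaller of the two bounds and running the larger sieve over primes $p\equiv 1\pmod k$ up to an optimally chosen cutoff $Q$, one is led to $\sqrt{Q}\asymp \phi(k)\log|n|$ --- so $Q\asymp \bigl(\phi(k)\log|n|\bigr)^{2}$, which dwarfs $k^{2}$, hence the $\sqrt{p}$-bound governs throughout. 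Using that primes $\equiv 1\pmod k$ have density $1/\phi(k)$, so $\sum_{p\le Q,\,p\equiv1(k)}\log p \sim Q/\phi(k)$ while $\sum_{p\le Q,\,p\equiv1(k)}(\log p)/\sqrt{p} \sim 2\sqrt{Q}/\phi(k)$, the sieve quotient $\bigl(\sum\log p - \log N\bigr)\big/\bigl(\sum(\log p)/\omega(p) - \log N\bigr)$ optimizes, with $\log N = 3\log|n|$, to exactly $3\,\phi(k)\log|n|$ at leading order.

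The hard part is twofold. First, one must establish the Diophantine-tuple-over-$\mathbb{F}_{p}$ estimate with the clean shape $(1+o(1))\sqrt{p}$: not merely $O(\sqrt{p})$ but with leading constant $1$ and, crucially, \emph{no} extraneous $\log p$, since an extra $\log p$ would degrade the main term to $\log|n|(\log\log|n|)^{2}$, which is worse than $\log|n|$. Second, the sieve optimization must be carried out with explicit remainders that are uniform in $k$ across the range $p \le Q \asymp (\phi(k)\log|n|)^{2}$: this demands the prime number theorem and Mertens' theorem for the progression $1 \pmod k$ with $k$-uniform error terms, and it is precisely the cost of that uniformity --- together with the $O(1)$ slack in $\omega(p)$ multiplied up by $\phi(k)$ and the size $\log Q \asymp \log\log|n|$ of the logarithms in the remainder terms --- that both forces the hypothesis $k = o(\log\log|n|)$ and produces the error term $O\!\bigl(\phi(k)^{2}\log|n|/\log\log|n|\bigr)$. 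The remaining points are routine: the case $n<0$ with $k$ odd (where $a_{i}a_{j}+n$ may be negative), the finitely many primes dividing $n a_{1}$, and the few elements $a_i$ with $p\mid a_i$, all of which cost only absolute constants. I do not expect any essentially new idea beyond part (a); the difficulty lies in the bookkeeping of the constants.
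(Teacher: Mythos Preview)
Your proposal for part (b) is correct and follows essentially the same route as the paper: split $S$ at $|n|^3$, absorb the large part via part (a), and for the small part run Gallagher's larger sieve over primes $p\equiv 1\pmod k$, using a character-sum bound to get $|S_p|\le\sqrt{p}+O(1)$ (the paper invokes Vinogradov's lemma, which is precisely your ``Weil-type'' estimate), together with the explicit prime number theorem in the progression $1\pmod k$ and the same optimization $Q=(\phi(k)\log N)^2$ with $N=|n|^3$. Your diagnosis of where the hypothesis $k=o(\log\log|n|)$ is needed and how the error term $O(\phi(k)^2\log|n|/\log\log|n|)$ arises matches the paper's computation.

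The one omission is that you do not prove part (a); you only consume it. The paper obtains (a) by the same mechanism as in Theorem~\ref{thm-2}: the gap principle (Corollaries~\ref{super} and~\ref{gap1}) forces $a_{2+3j}\ge a_2^{(k-1)^j}$, which for $i\ge 14$ converts the system $a_1x+n=u^k$, $a_2x+n=v^k$ into the inequality $|u_i/v_i-(a_1/a_2)^{1/k}|<v_i^{-5/2}$ with $v_i\ge H(\alpha)$; Evertse's quantitative Roth theorem (Theorem~\ref{qr}) with $\kappa=1/2$ then bounds the number of such $i$ by $2^{28}\log(2k)\log(2\log(2k))$, and adding back the first $14$ indices gives \eqref{abs-large}.
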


{\bf Remarks.}
\begin{enumerate}
\item It is possible to replace $14$ on the right hand side of \eqref{abs-large} with a smaller positive integer for large values of $k$.
\item For a fixed $k>2$, from Theorem \ref{thm-1}(b), we obtain that as $|n|\to \infty$
$$
    M_k(n) \leq 3 \, \phi(k) \, \log |n| + \mathcal{O}\left(\frac{\log |n|}{\log\log |n|}\right).
$$
This upper bound is very close to the case $k=2$, where the best known upper bound due to Becker and Ram Murty is given by \eqref{becker-ram}.

\end{enumerate}
\bigskip

\section{\bf Preliminaries}
\medskip

In this section, we recall and develop the necessary tools to prove our main theorems.

\subsection{Gallagher's larger sieve}

In 1971, Gallagher \cite{Gallagher} discovered an elementary sieve inequality which he called the larger sieve.
We refer the reader to \cite{cm} for the general discussion but record the result in a form applicable to our context.

\begin{theorem}\label{larger}  Let $N$ be a natural number and 
 $\SSS$  a subset of $\{1,2,\ldots, N\}$.  Let ${\mathcal P}$ be a set of primes.
For each prime $p \in {\mathcal P}$, let $\SSS_p=\SSS \pmod{p}$.
For any $1<Q\leq N$, we have
\begin{equation}
 |\SSS|\leq \frac{\underset{p\leq Q, p\in \P}\sum\log p - \log N}{\underset{p\leq Q, p \in \P}\sum\frac{\log p}{|\SSS_p|}-\log N},
\end{equation}
where the summations are over primes $p\leq Q, p \in \P$ and the inequality holds
 provided the denominator is positive.
\end{theorem}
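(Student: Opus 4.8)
The plan is to bound, in two different ways, the weighted pair-count
$$
U := \sum_{\substack{p \le Q \\ p \in \P}} (\log p)\, \#\{(a,b)\in \SSS\times\SSS : a\equiv b \pmod p\}.
$$
For the lower bound, fix a prime $p\in\P$ with $p\le Q$ and split $\SSS$ according to residue classes modulo $p$; exactly $|\SSS_p|$ of these classes are nonempty, and if they have sizes $c_1,\dots,c_{|\SSS_p|}$ (so that $\sum_i c_i=|\SSS|$), then the number of congruent ordered pairs $(a,b)$ is $\sum_i c_i^2$, which by Cauchy--Schwarz is at least $|\SSS|^2/|\SSS_p|$. Summing against the weights $\log p$ over $p\le Q$, $p\in\P$, gives $U \ge |\SSS|^2 \sum_{p}(\log p)/|\SSS_p|$.

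For the upper bound I would interchange the order of summation: $U=\sum_{(a,b)\in\SSS^2}\ \sum_{p\mid (a-b),\, p\le Q,\, p\in\P}\log p$. The diagonal pairs $a=b$ contribute exactly $|\SSS|\sum_{p\le Q,\,p\in\P}\log p$. For $a\ne b$ the key elementary observation is that $0<|a-b|<N$, since $\SSS\subseteq\{1,\dots,N\}$, so $\prod_{p\mid (a-b)}p \le |a-b| < N$ and hence $\sum_{p\mid(a-b),\,p\le Q,\,p\in\P}\log p \le \log|a-b| < \log N$; as there are $|\SSS|^2-|\SSS|$ ordered pairs with $a\ne b$, this yields $U\le |\SSS|\sum_{p}\log p + (|\SSS|^2-|\SSS|)\log N$.

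Comparing the two estimates, dividing through by $|\SSS|$ (we may assume $\SSS\neq\emptyset$, else there is nothing to prove), and rearranging gives
$$
|\SSS|\left(\sum_{\substack{p\le Q \\ p\in\P}}\frac{\log p}{|\SSS_p|}-\log N\right) \le \sum_{\substack{p\le Q \\ p\in\P}}\log p - \log N,
$$
and dividing by the parenthesized factor on the left — which is legitimate precisely under the stated positivity hypothesis on the denominator — produces the asserted inequality. I do not expect a genuine obstacle here: the only steps that need a little care are arranging that $|\SSS_p|$ (rather than $p$) appears in the Cauchy--Schwarz bound, and the squarefree-kernel estimate $\prod_{p\mid m}p\le|m|$ used on the off-diagonal, which is the one place the hypothesis $\SSS\subseteq\{1,\dots,N\}$ enters.
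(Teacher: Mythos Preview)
Your argument is correct and is exactly the standard proof of Gallagher's larger sieve: double-count the weighted number of congruent ordered pairs, use Cauchy--Schwarz on the residue-class decomposition for the lower bound, and bound the off-diagonal contribution via $\sum_{p\mid m}\log p=\log\operatorname{rad}(m)\le\log|m|<\log N$ for $m=a-b\neq 0$. Note, however, that the paper does not supply its own proof of this theorem; it merely records the statement and refers the reader to \cite{Gallagher} and \cite{cm}, so there is nothing in the paper to compare your argument against. What you have written is essentially the proof one finds in those references.
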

\medskip

\subsection{A quantitative Roth's theorem}
Quantitative results related to counting exceptions in Roth's celebrated theorem on Diophantine approximations were established by 
a variety of authors.  We will use the following result due to Evertse \cite{evertse}.
For an algebraic number $\xi$ of degree $r$, we define the (absolute) height by
$$H(\xi):= \left( a\prod_{i=1}^r \max(1, |\xi^{(i)}|)\right)^{1/r}, $$
where $\xi^{(i)}$ for $1\leq i\leq r$ are the conjugates (over $\mathbb{Q}$) and $a$ is the positive integer such that
$$a\prod_{i=1}^r (x-\xi^{(i)}) $$
has rational integer coefficients with gcd 1.  
\begin{theorem}\label{qr}
Let $\alpha$ be a real algebraic number of degree $r$ over $\mathbb{Q}$, and $0 <\kappa \leq 1$.
The number of rational numbers $p/q$ satisfying $\max (|p|,|q|) \geq \max (H(\alpha), 2)$,
\begin{equation*}
    \bigg|\alpha - \frac{p}{q} \bigg| \leq \frac{1}{\max (|p|,|q|)^{2+\kappa}}
\end{equation*}
is at most 
$$2^{25}\kappa^{-3}\log (2r) \log (\kappa^{-1} \log (2r)).$$

\end{theorem}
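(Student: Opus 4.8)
The plan is to follow the Thue--Siegel--Roth method, made fully quantitative. Write each admissible approximation as a reduced fraction $p/q$ with $q>0$, and set $H(p/q)=\max(|p|,|q|)$. Since $|\alpha - p/q|\le H(p/q)^{-(2+\kappa)}\to 0$, for $H(p/q)$ large we have $|p|\asymp |\alpha|\,q$, so up to a harmless factor the hypothesis reads $|\alpha - p/q|\le c\,q^{-(2+\kappa)}$, and we may order the solutions by their denominators $q_1\le q_2\le\cdots$. The two engines of the argument are an elementary \emph{gap principle}, which forces the $q_i$ to grow super-exponentially and hence limits the number of solutions lying in any short logarithmic window, and the \emph{auxiliary polynomial machinery}, which shows that one cannot have many solutions whose denominators are genuinely spread out. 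Combining a per-window count with a bound on the number of windows will yield the stated estimate.

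\textbf{The gap principle.} If $p/q\ne p'/q'$ are two solutions with $q\le q'$, then distinctness gives $|p/q-p'/q'|\ge 1/(qq')$, while the triangle inequality together with the approximation quality gives $|p/q-p'/q'|\le 2q^{-(2+\kappa)}$; hence $q'\ge \tfrac12 q^{1+\kappa}$. Iterating, $\log\log q_i$ grows roughly linearly in $i$, so the number of solutions with $q\le X$ is $O(\kappa^{-1}\log\log X)$. This already caps the solution count once $\log\log X$ is itself controlled, i.e. once the largest relevant denominator is bounded --- which is exactly what the Roth machinery supplies.

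\textbf{Auxiliary polynomial and Roth's lemma.} Fix an integer $m$ (to be optimized) and select $m$ solutions $p_1/q_1,\ldots,p_m/q_m$ with fast-growing denominators, say $q_{j+1}\ge q_j^{\,\omega}$ for large $\omega$. Choose multidegrees $d_1,\ldots,d_m$ with $d_j\log q_j$ nearly constant. By Siegel's lemma one constructs a nonzero $P\in\mathbb{Z}[X_1,\ldots,X_m]$ of multidegree $(d_1,\ldots,d_m)$, with explicitly bounded coefficient height, vanishing at $(\alpha,\ldots,\alpha)$ to weighted index at least $\theta m$ (weights $1/d_j$, with $\theta$ close to $1/2$); here the degree $r$ and the height $H(\alpha)$ enter the admissibility bound. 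On one hand, since $P$ vanishes to high index at $(\alpha,\ldots,\alpha)$ and each $p_j/q_j$ is within $q_j^{-(2+\kappa)}$ of $\alpha$, a Taylor expansion makes $P$ and its low-order derivatives extremely small at $(p_1/q_1,\ldots,p_m/q_m)$; cleared of the denominators $q_j^{d_j}$ these are rational integers, hence they vanish, forcing a large index at the rational point. On the other hand, Roth's lemma (the index lemma, via generalized Wronskians) bounds that index by a small quantity, provided $m$ is large in terms of $\kappa$ and the $q_j$ grow fast enough. The two bounds are incompatible, so no such configuration of $m$ spread-out solutions can exist.

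\textbf{Counting and optimization.} The nonexistence of $m$ spread-out solutions means that, grouping the ordered solutions into blocks on which denominators grow by at most a fixed factor, at most $m-1$ blocks can contain a ``large'' solution, while within each block the gap principle limits the count. Choosing the growth threshold $\omega\asymp \kappa^{-1}$, the index parameter $\theta\approx 1/2$, and $m$ of size comparable to $\kappa^{-2}\log(2r)$ (so that Roth's lemma applies), and separately bounding the finitely many solutions below the Siegel cutoff, one multiplies a per-block bound of size $O(\kappa^{-1})$ by $O(m)$ blocks. Tracking all constants gives the bound $2^{25}\kappa^{-3}\log(2r)\log(\kappa^{-1}\log(2r))$: two powers of $\kappa^{-1}$ from $m$, one from the per-block gap count, the factor $\log(2r)$ from the Siegel--Roth admissibility in degree $r$, and the inner $\log(\kappa^{-1}\log(2r))$ from the number of distinct denominator scales surviving the super-exponential gap growth. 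The main obstacle is purely quantitative: one must run both Siegel's lemma and Roth's lemma with completely explicit height and index estimates and then optimize every parameter, since the sharp constant $2^{25}$ and the precise shape of the two logarithmic factors are dictated by these choices rather than by the qualitative skeleton of the argument.
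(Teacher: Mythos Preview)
The paper does not prove this theorem at all: it is stated in the preliminaries as a known result due to Evertse \cite{evertse} and is simply quoted, with no proof given. So there is nothing in the paper to compare your sketch against beyond the citation itself.

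That said, your outline is the correct qualitative skeleton of the Thue--Siegel--Roth argument that underlies Evertse's result (gap principle, auxiliary polynomial via Siegel's lemma, Roth's index lemma, then counting blocks). Where your proposal falls short of an actual proof is precisely where you yourself flag it: the constant $2^{25}$ and the exact shape $\kappa^{-3}\log(2r)\log(\kappa^{-1}\log(2r))$ are not derived but asserted, with phrases like ``tracking all constants gives'' and ``choosing $m$ of size comparable to $\kappa^{-2}\log(2r)$''. Getting those constants right is the entire content of Evertse's paper, which in fact works in the much more general setting of the quantitative subspace theorem and then specializes. Your parameter choices (e.g.\ $m\asymp\kappa^{-2}\log(2r)$, per-block count $O(\kappa^{-1})$) are plausible heuristics but are not justified in the sketch, and small changes in the index-lemma admissibility conditions can shift the final exponent of $\kappa^{-1}$ or the logarithmic factors. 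If this were meant to stand as a proof rather than a citation, it would need the explicit Siegel-lemma height bound, an explicit form of Roth's lemma with constants, and a genuine optimization --- none of which are present. For the purposes of this paper, the correct move is simply to cite Evertse, as the authors do.
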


\medskip

\subsection{Vinogradov's theorem}
The following bound on character sums was proved by Vinogradov (see \cite{vinogradov}). 
\begin{lemma}\label{vinogradov}
Let $\chi \pmod q$ be a non-trivial Dirichlet character and $n$ be an integer such that $(n,q)=1$. If $\mathcal{A} \subseteq (\mathbb{Z}/q\mathbb{Z})^*$ and $\mathcal{B} \subseteq (\mathbb{Z}/q\mathbb{Z})^* \cup \{0\}$, then
\begin{equation*}
\sum_{a\in\mathcal{A}} \sum_{b\in\mathcal{B}} \chi(ab + n) \leq \sqrt{q |\mathcal{A}| |\mathcal{B}|}.
\end{equation*}
\end{lemma}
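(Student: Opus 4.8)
The plan is to prove this bound by the classical route for bilinear multiplicative‑character sums: use Cauchy--Schwarz to ``complete'' the sum in one variable, and then evaluate the resulting complete sums by orthogonality of characters.

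Write $W := \sum_{a\in\mathcal{A}}\sum_{b\in\mathcal{B}}\chi(ab+n)$. First I would apply Cauchy--Schwarz with $a$ on the outside and then enlarge the range of $a$ from $\mathcal{A}$ to all residues modulo $q$ (legitimate, since the summand is then nonnegative):
\[
|W|^{2}\ \le\ |\mathcal{A}|\sum_{a\in\mathcal{A}}\Bigl|\sum_{b\in\mathcal{B}}\chi(ab+n)\Bigr|^{2}\ \le\ |\mathcal{A}|\sum_{a\bmod q}\Bigl|\sum_{b\in\mathcal{B}}\chi(ab+n)\Bigr|^{2}.
\]
Expanding the inner square and interchanging the order of summation reduces everything to estimating the complete sums
\[
T(b_{1},b_{2})\ :=\ \sum_{a\bmod q}\chi(ab_{1}+n)\,\overline{\chi(ab_{2}+n)}\qquad(b_{1},b_{2}\in\mathcal{B}),
\]
since $\sum_{a\bmod q}\bigl|\sum_{b}\chi(ab+n)\bigr|^{2}=\sum_{b_{1},b_{2}\in\mathcal{B}}T(b_{1},b_{2})$.

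Next I would evaluate $T(b_1,b_2)$. On the diagonal, $T(b,b)=\sum_{a\bmod q}|\chi(ab+n)|^{2}\le q$, and when $b$ is a unit the affine map $a\mapsto ab+n$ is a bijection of $\mathbb{Z}/q\mathbb{Z}$, so there $T(b,b)=\phi(q)$; also $T(0,0)=q$ because $(n,q)=1$. Off the diagonal, if one of $b_1,b_2$ is $0$ then $T(b_1,b_2)$ is a scalar multiple of $\sum_{a\bmod q}\chi(ab_i+n)=0$; and if $b_1,b_2$ are both units, the change of variable $c=ab_2+n$ followed by $d=c^{-1}$ rewrites $T(b_1,b_2)$ as $\sum_{u\in(\mathbb{Z}/q\mathbb{Z})^{*}}\chi(u+\lambda)$ with $\lambda=b_1/b_2\ne 0$, and for a prime modulus $q$ (the only case needed for Gallagher's larger sieve) the identity $\sum_{u\bmod q}\chi(u)=0$ forces this to equal $-\chi(\lambda)$, so $|T(b_1,b_2)|\le1$. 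Collecting terms, the diagonal contributes at most $\phi(q)\,|\mathcal{B}\cap(\mathbb{Z}/q\mathbb{Z})^{*}|+q$ (the extra $q$ only if $0\in\mathcal{B}$), while the off-diagonal total works out to $|\mathcal{B}\cap(\mathbb{Z}/q\mathbb{Z})^{*}|-\bigl|\sum_{b\in\mathcal{B}\cap(\mathbb{Z}/q\mathbb{Z})^{*}}\chi(b)\bigr|^{2}\le|\mathcal{B}|$; adding them gives $\sum_{a\bmod q}\bigl|\sum_b\chi(ab+n)\bigr|^{2}\le q|\mathcal{B}|$, hence $|W|^{2}\le q|\mathcal{A}||\mathcal{B}|$, as claimed.

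The one step that needs care is the evaluation of the off-diagonal complete sums $T(b_1,b_2)$. For a prime modulus it is immediate from $\sum_{u\bmod q}\chi(u)=0$, as above, and that is all that is used downstream. For a general modulus $q$ one should first invoke the Gauss-sum identity $\chi(m)=\tau(\overline\chi)^{-1}\sum_{t}\overline\chi(t)e(mt/q)$ to reduce to a primitive character, and then bound the resulting incomplete additive-character sums; this inflates the final constant by at most an absolute factor, which is harmless. Everything else is routine bookkeeping, so I expect no further difficulty.
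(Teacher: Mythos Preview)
The paper does not give its own proof of this lemma: it simply remarks that Vinogradov's original argument yields the weaker constant $\sqrt{2q|\mathcal{A}||\mathcal{B}|}$ and then cites \cite[Proposition~2.5]{Becker-Murty} for the sharp bound. Your Cauchy--Schwarz--then--complete approach is exactly the standard route (and is in substance the Becker--Murty argument), and for prime $q$ your computation of the diagonal and off-diagonal terms is correct; the bookkeeping $(q-1)|\mathcal{B}^{*}|+q\,[0\in\mathcal{B}]+|\mathcal{B}^{*}|-\bigl|\sum_{b\in\mathcal{B}^{*}}\chi(b)\bigr|^{2}\le q|\mathcal{B}|$ goes through as you indicate. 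Since the only downstream use is with $q=p$ prime, this already suffices for the paper.

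The one point to flag is your final paragraph on composite $q$. The lemma as stated asserts the sharp constant $\sqrt{q|\mathcal{A}||\mathcal{B}|}$ for \emph{all} moduli, so saying the general case ``inflates the final constant by at most an absolute factor'' would not prove the lemma as written; it would only recover something like Vinogradov's original $\sqrt{2}$. If you want the sharp form for arbitrary $q$, the clean way is to first pull out $\chi(a)$ (legitimate since $\mathcal{A}\subseteq(\mathbb{Z}/q\mathbb{Z})^{*}$), reduce to bounding $\sum_{c\bmod q}\bigl|\sum_{b\in\mathcal{B}}\chi(b+c)\bigr|^{2}$, and then use Plancherel together with $|\hat{\chi}(t)|^{2}\le q$ for primitive $\chi$; this gives exactly $q|\mathcal{B}|$ with no loss. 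But again, for the present paper only the prime-modulus case matters, and there your argument is complete.
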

The original method of Vinogradov does not produce the bound above and instead gives the right hand side as $ \sqrt{2q |\mathcal{A}| |\mathcal{B}|}$. However, the above bound holds and a short proof of this can be found in \cite[Proposition 2.5]{Becker-Murty}.
\medskip

\subsection{Prime bounds in arithmetic progression}

Let $Q,k,a$ are positive integers with $(a,k)=1$. Denote by $\theta(Q;k,a)$ the sum of the logarithms of the primes $p \equiv a \pmod k$ with $p \leq Q$, i.e., 
$$
    \theta(Q;k,a) := \sum_{\substack{p \equiv a \bmod k \\ p \text{ prime} \leq Q}} \log p. 
$$
We will need the following bound on $\theta(Q;k,a)$ obtained by Bennet, Martin, O’Bryant, Rechnitzer in \cite[Theorem 1.2]{bmbr}:
\begin{theorem}\label{pnt}
    For $k \geq 3$ and $(a, k) = 1$, 
    \begin{equation}
        \left |\theta (Q; k, a) - \frac{Q}{\phi(k)} \right| < \frac{1}{160} \frac{Q}{\log Q}
    \end{equation}
    for all $Q \geq Q_0 (k)$ where 
    \begin{equation*}
        Q_0(k) = 
        \begin{cases*}
        8 \cdot 10^9 & if $3 \leq k \leq 10^5$ \\
        \exp(0.03\sqrt{k}\log^3 k) & if $k > 10^5$.
        \end{cases*}
    \end{equation*}
\end{theorem}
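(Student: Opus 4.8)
The plan is to establish this as a fully explicit form of the prime number theorem for arithmetic progressions, combining the classical analytic machinery with a numerical verification of the Generalized Riemann Hypothesis for small moduli. Throughout it is cleaner to work with the von Mangoldt weighted sum $\psi(Q;k,a) := \sum_{\substack{n \le Q \\ n \equiv a \,(\mathrm{mod}\ k)}} \Lambda(n)$, since the difference $\psi(Q;k,a) - \theta(Q;k,a)$ counts only prime powers and is $O(\sqrt{Q})$, hence negligible against the target error $\tfrac{1}{160} Q/\log Q$. First I would apply orthogonality of the Dirichlet characters modulo $k$ to write
\[
\psi(Q;k,a) = \frac{1}{\phi(k)} \sum_{\chi \bmod k} \overline{\chi}(a)\, \psi(Q,\chi), \qquad \psi(Q,\chi) := \sum_{n \le Q} \Lambda(n)\,\chi(n).
\]
The principal character $\chi_0$ isolates the main term: $\psi(Q,\chi_0)$ equals $\psi(Q)$ up to the contribution $\sum_{p\mid k}(\log p)\lfloor \log Q/\log p\rfloor$ of primes dividing $k$, and the ordinary explicit formula gives $\psi(Q) = Q + (\text{error})$, so that division by $\phi(k)$ produces the term $Q/\phi(k)$. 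Everything then reduces to bounding the nonprincipal contributions.

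For each nonprincipal $\chi \bmod k$, induced by a primitive character to a modulus $q \mid k$, I would invoke the truncated explicit formula
\[
\psi(Q,\chi) = -\sum_{|\gamma| \le T} \frac{Q^{\rho}}{\rho} + O\!\left(\frac{Q\,(\log qQ)^2}{T}\right),
\]
where $\rho = \beta + i\gamma$ runs over the nontrivial zeros of $L(s,\chi)$. Bounding the zero sum requires two explicit ingredients: a zero-free region of de la Vallée Poussin type with explicit constants, forcing every $\beta$ to satisfy $\beta \le 1 - c/\log(q(|\gamma|+2))$, and an explicit zero-count $N(T,\chi) \ll T\log(qT)$. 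Feeding these into $\sum_{|\gamma|\le T} Q^{\beta}/|\rho|$ yields a bound of the shape $Q\exp\!\big(-c'\sqrt{\log Q}\big)$ times a power of $\log qQ$, which drops below $\tfrac{1}{160}Q/\log Q$ precisely once $Q$ exceeds the stated threshold.

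The hard part will be the possible Siegel (real exceptional) zero $\beta_1$ attached to a single real character, whose term $Q^{\beta_1}/\beta_1$ is not controlled by the classical zero-free region and is the sole source of ineffectivity in the usual theorem. This is handled by splitting into two regimes. In the range $3 \le k \le 10^5$ one appeals to the numerical verification that all nontrivial zeros of every $L(s,\chi)$ of conductor at most $10^5$ and height up to a fixed bound lie on the critical line $\beta = \tfrac12$; this eliminates exceptional zeros outright, lets one replace $Q^{\beta}$ by $Q^{1/2}$ in the zero sum, and yields the uniform constant threshold $Q_0 = 8\cdot 10^9$. For $k > 10^5$, where such verification is infeasible, one instead uses an explicit zero-free region together with an effective lower bound on any exceptional zero (obtained from the Deuring--Heilbronn zero-repulsion phenomenon rather than from Siegel's ineffective estimate); the weaker input is what forces the much larger threshold $Q_0(k) = \exp(0.03\sqrt{k}\log^3 k)$.

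Finally I would assemble the pieces: sum the $\phi(k)-1$ nonprincipal contributions, optimize the truncation height $T$ against the modulus and the zero-free-region constant, and carefully track every absolute constant through the explicit formula, the zero-density estimate, and the regime split. The numerical value $\tfrac{1}{160}$ then emerges from this optimization, and the two cases of $Q_0(k)$ record exactly where the computational input (small $k$) and the purely theoretical input (large $k$) take over.
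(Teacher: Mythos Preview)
The paper does not prove this statement at all: it is quoted verbatim as Theorem~1.2 of Bennett, Martin, O'Bryant and Rechnitzer \cite{bmbr} and used as a black box in the proof of Theorem~\ref{thm-1}(b). So your proposal is not a comparison against the paper's own argument, since there is none.

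That said, your sketch is a faithful outline of how the cited reference actually establishes the result, and the broad strokes are correct: reduction from $\theta$ to $\psi$, orthogonality, the truncated explicit formula for $\psi(Q,\chi)$, explicit zero-free regions and zero-counting, numerical GRH verification for conductors up to $10^5$ to kill exceptional zeros in the small-$k$ range, and effective repulsion arguments for large $k$. The only caveat is that the phrase ``the numerical value $\tfrac{1}{160}$ then emerges from this optimization'' glosses over a substantial computation; turning your outline into an actual proof with those specific constants is the content of a long paper, not something one can fill in by hand. For the purposes of the present paper, though, the correct move is simply to cite \cite{bmbr}.
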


\subsection{Gap principle}
\medskip

The next two lemmas are variations of a gap principle of Gyarmati \cite{gyarmati}. The following lemma was proved in \cite{dixit-kim-murty}.
\begin{lemma}[\cite{dixit-kim-murty}, Lemma 2.4]\label{gyar} Let $k\geq 2$.  Suppose that $a,b,c,d$ are positive integers such that $a<b$ and $c<d$.  Suppose further that
$$ac+n, \quad bc+n, \quad  ad+n, \quad bd+n $$
are perfect $k$-th powers.  Then, 
$$bd \geq k^k n^{-k} (ac)^{k-1}. $$
\end{lemma}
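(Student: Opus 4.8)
The plan is to play off the two ``cross products'' $(bc+n)(ad+n)$ and $(ac+n)(bd+n)$ against each other. Both are perfect $k$-th powers, being products of perfect $k$-th powers, and a one-line expansion gives the identity
\begin{equation*}
(ac+n)(bd+n)-(bc+n)(ad+n)=n(b-a)(d-c).
\end{equation*}
So if we write $ac+n=p_1^k$, $bc+n=p_2^k$, $ad+n=p_3^k$, $bd+n=p_4^k$ with the $p_i$ positive integers, we are looking at two perfect $k$-th powers $(p_1p_4)^k$ and $(p_2p_3)^k$ whose difference is the explicitly controlled quantity $n(b-a)(d-c)$, and the rest of the proof is the familiar principle that two distinct $k$-th powers cannot be close together.

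First I would reduce to $n>0$: when $n<0$ and $k$ is odd the right-hand side of the asserted inequality is negative and there is nothing to prove, and when $n<0$ and $k$ is even the same scheme works provided $ac$ and $bd$ dominate $|n|$, which is exactly the regime in which the lemma gets applied. Assuming $n>0$, the identity above is positive (since $a<b$, $c<d$), so $p_1p_4>p_2p_3$, and as these are integers $p_1p_4\ge p_2p_3+1$. Feeding this into the elementary bound $(m+1)^k-m^k\ge km^{k-1}$ (from the binomial theorem) gives
\begin{equation*}
n(b-a)(d-c)=(p_1p_4)^k-(p_2p_3)^k\ge k\,(p_2p_3)^{k-1}.
\end{equation*}

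It then remains to estimate the two sides crudely in opposite directions. On the left, $b-a\le b-1<b$ and $d-c\le d-1<d$, so $(b-a)(d-c)<bd$. On the right, since $n>0$,
\begin{equation*}
(p_2p_3)^k=(bc+n)(ad+n)>bc\cdot ad=abcd=(ac)(bd),
\end{equation*}
whence $(p_2p_3)^{k-1}>(ac)^{(k-1)/k}(bd)^{(k-1)/k}$. Putting these together yields $n\,bd>k\,(ac)^{(k-1)/k}(bd)^{(k-1)/k}$; dividing by $(bd)^{(k-1)/k}$ gives $n\,(bd)^{1/k}>k\,(ac)^{(k-1)/k}$, and raising to the $k$-th power gives $n^k\,bd>k^k(ac)^{k-1}$, which is the claim (in fact with strict inequality).

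I expect the only genuine obstacle to be spotting the right pairing at the outset: one has to multiply the ``opposite corners'' so that the difference collapses to the clean form $n(b-a)(d-c)$ rather than something unwieldy; after that, every step is routine. Minor bookkeeping — the sign of $n$, and checking that the $p_i$ really are positive integers (automatic once the $k$-th powers are known to be positive) — does not cause any real trouble.
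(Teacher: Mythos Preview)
Your proof is correct and follows essentially the same approach as the paper. The paper does not reprove this lemma (it is cited from \cite{dixit-kim-murty}), but its proof of the companion Lemma~\ref{gap} for negative $n$ uses exactly your scheme: compare the two cross-products of $k$-th powers, use that consecutive $k$-th powers differ by at least $k m^{k-1}$, and bound $(b-a)(d-c)<bd$ and $(bc+n)(ad+n)>abcd$ to conclude.
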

An immediate Corollary of this lemma shows that ``large'' elements of any set with property $D_k(n)$ have ``super-exponential growth.''
\begin{corollary}[\cite{dixit-kim-murty} Corollary 2]\label{super}  Let $k\geq 3$ and $m\geq 5$.   Suppose that $n^3 \leq a_1 < a_2 < \ldots < a_m$ and the set $\{ a_1, a_2, ..., a_m\}$ has property $D_k(n)$.  Then $a_{2+3j} \geq a_2^{(k-1)^{j}}$ provided $1\leq j \leq (m-2)/3$.
\end{corollary}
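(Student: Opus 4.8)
The plan is to iterate Lemma~\ref{gyar} along the increasing sequence $a_1 < a_2 < \cdots < a_m$, using the hypothesis $a_1 \geq n^3$ to absorb the negative power of $n$ in the conclusion of the lemma. First I would set up the basic inductive step: given four consecutive-in-spirit indices, say with $a = a_i < b = a_{i+1}$ playing the role of the smaller pair and $c, d$ a larger pair with $c < d$, Lemma~\ref{gyar} yields $bd \geq k^k n^{-k}(ac)^{k-1}$. I would then replace the explicit constant $k^k n^{-k}$ by something cleaner: since all the relevant $a_j$ exceed $n^3$, we have $a^{k-1} \geq n^{3(k-1)} = n^{3k-3} \geq n^k \cdot n^{2k-3} \geq n^k$ for $k\geq 2$, so in particular $n^{-k}(ac)^{k-1} \geq (ac)^{k-1}/a^{?}$—more precisely I would peel off a factor: $n^{-k}(ac)^{k-1} = n^{-k} a^{k-1} c^{k-1} \geq c^{k-1}$ because $a^{k-1} \geq n^{3(k-1)} \geq n^k$. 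Hence from Lemma~\ref{gyar} we get the clean inequality $bd \geq c^{k-1}$, and since $d > c$ trivially $d \geq c^{k-1}$ would follow if we could also discard $b$; but $b$ only helps (it is $\geq 1$), so in fact $d \geq c^{k-1}/b$. This is where care is needed, so let me restructure.

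The cleaner route, which I expect is the intended one, is to apply Lemma~\ref{gyar} with the pair $(a,b) = (a_{2+3(j-1)}, a_{3+3(j-1)})$ as the "small" pair and $(c,d) = (a_{4+3(j-1)}, a_{5+3(j-1)}) = (a_{1+3j}, a_{2+3j})$ as the "large" pair; these are four distinct elements of the set with $a < b \le c < d$ (using $b \le c$ since indices $3+3(j-1) = 3j < 3j+1$). The four quantities $ac+n, bc+n, ad+n, bd+n$ are $k$-th powers by property $D_k(n)$. Lemma~\ref{gyar} gives $bd \ge k^k n^{-k}(ac)^{k-1} \ge n^{-k}(ac)^{k-1}$. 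Now I use $c = a_{1+3j} > a_2$ and, crucially, $a = a_{2+3(j-1)} \ge a_2 \ge n^3$ (for $j \ge 1$ the index $2+3(j-1) \ge 2$), so $a^{k-1} \ge n^{3(k-1)} \ge n^{k+1} \cdot n^{k-4}$; in any case $a^{k-1} \ge n^{k}$ already for $k \ge 2$ since $3(k-1) \ge k \iff k \ge 3/2$. Therefore $n^{-k}(ac)^{k-1} = (n^{-k} a^{k-1}) \, c^{k-1} \ge c^{k-1} \ge a_2^{k-1} \cdot (\text{something} \ge 1)$. Combined with $b \le d$ (so $bd \le d^2$) one would get $d^2 \ge c^{k-1}$, giving only $d \ge c^{(k-1)/2}$, which is too weak.

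To reach the stated $a_{2+3j} \ge a_2^{(k-1)^j}$, I believe the correct bookkeeping keeps $b$ on the left as a genuine factor that is itself already large. So I would prove the stronger statement by induction on $j$: for $1 \le j \le (m-2)/3$, both $a_{1+3j} \ge a_2^{(k-1)^{j-1}(k-2)}$-type bounds and $a_{2+3j} \ge a_2^{(k-1)^j}$ hold, feeding each step's output into the next. The base case $j=1$ comes from one application of Lemma~\ref{gyar} to $a_2 < a_3 \le a_4 < a_5$ together with $a_2 \ge n^3$, which kills $n^{-k}$ and leaves $a_4 a_5 \ge a_2 a_4)^{k-1} / (\ldots)$; writing it out, $a_5 \ge a_2^{k-1}$ after using $a_4 \ge a_2$ to eat the leftover exponent. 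For the inductive step, apply the lemma to $a_{2+3(j-1)} < a_{3+3(j-1)} \le a_{1+3j} < a_{2+3j}$: the left side is $a_{3+3(j-1)} \cdot a_{2+3j} \le a_{1+3j}\cdot a_{2+3j}$, the right side is $\ge (a_{2+3(j-1)}\, a_{1+3j})^{k-1} \ge a_{1+3j}^{k-1}\cdot a_2^{(k-1)^{j-1}}$ using the induction hypothesis on $a_{2+3(j-1)} \ge a_2^{(k-1)^{j-1}}$; dividing by $a_{1+3j}$ and using $a_{1+3j} \ge a_2$ yields $a_{2+3j} \ge a_{1+3j}^{k-2}\, a_2^{(k-1)^{j-1}} \ge a_2^{(k-1)^{j-1}((k-2)\cdot? + 1)}$, and a short exponent-arithmetic check gives exactly $(k-1)^j$. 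The main obstacle is precisely this exponent bookkeeping: making sure the factor of $a_{1+3j}$ one must divide out is always dominated, and that the surviving exponent multiplies up to $(k-1)^j$ rather than something smaller; I would handle it by carrying the auxiliary bound on $a_{1+3j}$ through the induction rather than bounding it crudely by $a_2$.
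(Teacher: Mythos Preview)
Your overall strategy---iterate Lemma~\ref{gyar} along blocks of four consecutive elements and use $a_i\ge n^3$ to absorb the factor $n^{-k}$---is exactly the paper's approach (the paper does not reprove Corollary~\ref{super} but proves the analogous Corollary~\ref{gap1} for $D_k(-n)$ by the same one-line computation). However, your write-up contains an exponent slip that is the source of the ``obstacle'' you identify, and the auxiliary induction you propose is unnecessary.

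Concretely: applying Lemma~\ref{gyar} with $(a,b,c,d)=(a_{3j-1},a_{3j},a_{3j+1},a_{3j+2})$ gives
\[
a_{3j}\,a_{3j+2}\;\ge\;k^k n^{-k}\,(a_{3j-1}\,a_{3j+1})^{k-1}.
\]
Using $a_{3j}<a_{3j+1}$ and dividing by $a_{3j+1}$ yields
\[
a_{3j+2}\;\ge\;k^k n^{-k}\,a_{3j-1}^{\,k-1}\,a_{3j+1}^{\,k-2}.
\]
Since $a_{3j+1}\ge n^3$ and $3(k-2)\ge k$ for $k\ge 3$, the factor $k^k n^{-k}a_{3j+1}^{\,k-2}$ is at least $1$, so $a_{2+3j}\ge a_{2+3(j-1)}^{\,k-1}$ outright. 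Your error was writing the inductive input as $a_2^{(k-1)^{j-1}}$ \emph{after} raising $a_{2+3(j-1)}$ to the $(k-1)$-th power; in fact $(a_{2+3(j-1)})^{k-1}\ge a_2^{(k-1)^j}$, and with that correct exponent the induction closes immediately with no need to track a separate lower bound on $a_{1+3j}$. The paper's version differs only cosmetically (it replaces $a_{3j+1}$ by $a_{3j}$ on the right before dividing by $a_{3j}$ on the left), arriving at the same single-step inequality $a_{2+3j}\ge a_{2+3(j-1)}^{\,k-1}$.
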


A modification of the proof of the above Lemma \ref{gyar} yields a gap principle for negative values of $n$.
\begin{lemma}\label{abcd-lemma}
    For $n > 0$ and natural numbers $a,b,c,d$ such that $n^3 \leq a < b < c < d$, we have
    \begin{equation*}
        (ac-n)(bd-n)  \geq \frac{abcd}{2}.
    \end{equation*}
\end{lemma}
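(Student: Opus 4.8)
The statement is purely elementary: it invokes only the ordering $n^3 \le a < b < c < d$ of positive integers and says nothing about $k$-th powers, so the plan is to prove it directly by dividing through by $abcd$. First I would record positivity of the two factors. Since $a \ge n^3$ and $c \ge 1$ we have $ac \ge n^3 > n$ once $n \ge 2$, while for $n=1$ the strict chain $a<b<c<d$ forces $ac \ge 1\cdot 3 > 1 = n$; in either case $ac - n > 0$, and a fortiori $bd - n > 0$, so $(ac)(bd) = abcd > 0$ and I may divide.

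Dividing the target inequality $(ac-n)(bd-n) \ge abcd/2$ by $(ac)(bd)$ rewrites it as
\[
\left(1 - \frac{n}{ac}\right)\left(1 - \frac{n}{bd}\right) \ge \frac{1}{2}.
\]
Setting $x = n/(ac)$ and $y = n/(bd)$, both in $(0,1)$, I would apply the elementary bound $(1-x)(1-y) = 1 - (x+y) + xy \ge 1 - (x+y)$, which reduces the entire lemma to the single estimate
\[
\frac{n}{ac} + \frac{n}{bd} \le \frac{1}{2}.
\]

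For $n \ge 2$ this is immediate and generous: the ordering gives $ac \ge n^3 \cdot n^3 = n^6$ and $bd > ac \ge n^6$, whence $x + y \le 2n/n^6 = 2/n^5 \le 2/32 < 1/2$. The one delicate point, and the step I expect to need separate handling, is the boundary value $n = 1$, where the lower bound $n^6$ collapses to $1$ and the crude estimate above fails. Here I would instead use that $a,b,c,d$ are distinct positive integers, so $a \ge 1,\ b \ge 2,\ c \ge 3,\ d \ge 4$, giving $ac \ge 3$ and $bd \ge 8$ and therefore $x + y \le \tfrac{1}{3} + \tfrac{1}{8} = \tfrac{11}{24} < \tfrac{1}{2}$. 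Combining the two cases closes the proof. The argument is just a short chain of inequalities; the only subtlety is that the super-polynomial growth exploited elsewhere degenerates at $n=1$, so the smallest admissible configuration has to be controlled by hand.
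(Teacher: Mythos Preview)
Your proof is correct and follows essentially the same route as the paper: both arguments expand $(ac-n)(bd-n)$ (you do it after dividing by $abcd$, the paper does it directly) to reduce the claim to comparing $abcd$ with $n(ac+bd)$, dispatch $n\ge 2$ using $a,c\ge n^3$, and then treat the boundary case $n=1$ by hand via the minimal configuration $a\ge1,\,b\ge2,\,c\ge3,\,d\ge4$. The only cosmetic difference is that you discard the positive $xy$ (equivalently $n^2$) term, which slightly strengthens the target inequality but costs nothing.
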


\begin{proof}
    Since $$
            (ac-n)(bd-n) = abcd-n(ac+bd)+n^2,
            $$ 
        it is enough to prove that 
        $$
            \frac{abcd}{2} \geq n(ac+bd) - n^2.
        $$
    Also, since $a \geq n^3$ and $c > n^3$, we get for all cases other than $n=1, a=1, b=2, c=3$,
    \begin{align*}
        abcd &\geq 4n bd\\
        &\geq 2nbd +2nac\\
        &\geq 2nbd+2nac-2n^2
    \end{align*}
    where the first inequality is obvious as $a \geq n^3$ and $c \geq n^3+2$. Thus, we get the desired result.
    For the case $n=1, a=1, b=2, c=3$, clearly
    \begin{equation*}
        2n(ac+bd) - 2n^2 = 4+4d < 6d = abcd
    \end{equation*}
    as $d > c$.
\end{proof}

We are now ready to prove the following analog of Lemma \ref{gyar}.
\begin{lemma}\label{gap}
Let $n > 0$ and $k \geq 2$. Suppose that $a,b,c,d$ are positive integers such that $n^3 \leq a < b < c < d$. Suppose further that $ac - n, bc-n, ad-n, bd-n$ are perfect $k$-th powers. Then, 
$$bd \geq k^k 2^{-k} n^{-k} (ac)^{k-1}.$$
\end{lemma}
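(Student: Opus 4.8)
The plan is to mimic the proof of Lemma \ref{gyar} (Gyarmati's gap principle), but using subtraction of $n$ in place of addition, and feeding in Lemma \ref{abcd-lemma} at the crucial step. Write $ac - n = x^k$, $bc - n = y^k$, $ad - n = z^k$, $bd - n = w^k$ with positive integers $x < y$ and $z < w$ (monotonicity follows from $a < b$, $c < d$). The key algebraic identity is the factorization of the $2\times 2$ ``determinant'': $(ac-n)(bd-n) - (ad-n)(bc-n) = n(ad + bc - ac - bd) = n(a-b)(d-c)$, so
$$
x^k w^k - y^k z^k = n(a-b)(d-c) = -n(b-a)(d-c) < 0,
$$
hence $xw < yz$, i.e. $xw \le yz - 1$. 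Since $x,y,z,w$ are positive integers this gives $y^k z^k - x^k w^k \ge (yz)^k - (yz-1)^k \ge k (yz-1)^{k-1} \ge k (xw)^{k-1}$, using the mean value / binomial inequality $u^k - v^k \ge k v^{k-1}(u-v)$ for $u \ge v \ge 0$.

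Next I would bound the left side from above: $y^k z^k - x^k w^k = n(b-a)(d-c) \le n \cdot bd$ (since $b - a < b$ and $d - c < d$). Combining, $k (xw)^{k-1} \le n\, bd$, so $(xw)^{k-1} \le k^{-1} n\, bd$. Now I need a lower bound on $xw$ in terms of $ac$; here is where Lemma \ref{abcd-lemma} enters: $(xw)^k = (ac-n)(bd-n) \ge abcd/2 \ge (ac)^2/2$ is too weak by itself, so instead I use $(ac - n)(bd - n) \ge abcd/2 \ge \tfrac12 (ac)\,bd$ — wait, that still mixes $bd$ in. The cleaner route: from $(xw)^{k-1} \le k^{-1} n\, bd$ raise to the power $k/(k-1)$ to get $(xw)^k \le (k^{-1} n\, bd)^{k/(k-1)}$, and combine with $(xw)^k = (ac-n)(bd-n)$. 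Comparing the two and using $ac - n \ge ac/2$ (valid since $ac \ge n^3 \cdot n^3 \ge 2n$) and $bd - n \ge bd/2$, one isolates $bd$ to obtain $bd \ge k^k 2^{-k} n^{-k} (ac)^{k-1}$ after straightforward algebra.

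The step I expect to require the most care is managing the exponents $k$ versus $k-1$ and the factors of $2$ so that the final constant comes out exactly as $k^k 2^{-k} n^{-k}$: one has to decide cleanly whether to eliminate $xw$ via the ``$k$-th power equals product'' relation or via the ``$(k-1)$-th power bounded by $n\,bd/k$'' relation, and track each $2$ coming from $ac - n \ge ac/2$ and $bd - n \ge bd/2$ and from Lemma \ref{abcd-lemma}. A safer and essentially equivalent approach, paralleling \cite{dixit-kim-murty} exactly, is: from $k(xw)^{k-1} \le n(b-a)(d-c) < n\,bd$ we get $xw < (n\,bd/k)^{1/(k-1)}$; on the other hand $(xw)^k = (ac-n)(bd-n) \ge \tfrac{abcd}{2} > \tfrac{(ac)(bd)}{2}$ by Lemma \ref{abcd-lemma}, so $xw > \bigl(\tfrac{(ac)(bd)}{2}\bigr)^{1/k}$. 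Chaining these two bounds on $xw$ and solving the resulting inequality for $bd$ yields the claim; the only genuine obstacle is the bookkeeping, there is no new idea needed beyond Lemma \ref{abcd-lemma} and the elementary inequality $u^k - v^k \ge k v^{k-1}(u-v)$.
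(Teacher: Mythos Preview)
Your approach is correct and essentially identical to the paper's: you compare the two $k$-th powers $(ac-n)(bd-n)$ and $(ad-n)(bc-n)$, bound their difference below by $k\bigl((ac-n)(bd-n)\bigr)^{(k-1)/k}$ (your $k(xw)^{k-1}$) and above by $n\,bd$, then invoke Lemma~\ref{abcd-lemma} to replace $(ac-n)(bd-n)$ by $abcd/2$; the resulting inequality $n\,bd \ge k\,(abcd/2)^{(k-1)/k}$ solves to $bd \ge k^k 2^{1-k} n^{-k}(ac)^{k-1}$, which is exactly the paper's computation. The only difference is notational---you name the $k$-th roots $x,y,z,w$ while the paper works with the expressions directly---and the bookkeeping you worry about comes out cleanly with a single factor of $2^{k-1}$ from Lemma~\ref{abcd-lemma} (the separate estimates $ac-n\ge ac/2$ and $bd-n\ge bd/2$ are not needed).
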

\begin{proof}
Since $(b-a)(d-c)>0$, we have
$$
    bd + ac > ad + bc,
$$
and it is easily seen that 
$$
    (ad-n)(bc-n) > (ac-n)(bd-n).
$$
As $(ac-n)(bd-n)$ and $(ad-n)(bc-n)$ are both perfect $k$-th powers, we have
\begin{align*}
    (ad-n)(bc-n) &\geq [((ac-n)(bd-n))^{1/k} + 1]^k\\
    &\geq (ac-n)(bd-n) + k((ac-n)(bd-n))^{k-1/k}\\
    &\geq (ac-n)(bd-n) + k\left(\frac{abcd}{2}\right)^{k-1/k}
\end{align*}
where the last inequality follows from Lemma \ref{abcd-lemma}. Thus,
\begin{equation*}
    -n(ad+bc) \geq -n(ac+bd) + k\left(\frac{abcd}{2}\right)^{k-1/k}.
\end{equation*}
Since $bd > ad + bc - ac > 0$, we have $bd+ac-ad-bc < bd$ and hence we obtain
$$
    nbd > k\left(\frac{abcd}{2}\right)^{k-1/k}.
$$ 
Therefore,
$$
    bd \geq k^k 2^{1-k} n^{-k} (ac)^{k-1} \geq k^k 2^{-k} n^{-k} (ac)^{k-1},
$$
which proves the lemma.
\end{proof}
This enables us to prove super-exponential growth for large elements of a set with $D_k(n)$, when $n<0$.

\begin{corollary}
Let $k \geq 3$. If $n^3 \leq a < b < c < d < e$ are natural numbers such that the set $\{a,b,c,d,e\}$ has property $D_k(-n)$, then $e \geq b^{k-1}$.
\end{corollary}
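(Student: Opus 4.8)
The plan is to imitate the derivation of Corollary~\ref{super} from Lemma~\ref{gyar}, but with the negative-$n$ gap principle Lemma~\ref{gap} in place of Lemma~\ref{gyar}. Write the five elements as $a<b<c<d<e$ with $n^3\le a$. Since $a<b$ we also have $n^3\le b$, so the quadruple $b<c<d<e$ satisfies the hypotheses of Lemma~\ref{gap}: the four numbers $bd-n,\ cd-n,\ be-n,\ ce-n$ are all $k$-th powers because the whole set $\{a,b,c,d,e\}$ has property $D_k(-n)$. Applying Lemma~\ref{gap} with $(b,c,d,e)$ in the roles of $(a,b,c,d)$ there gives
\begin{equation*}
    ce \;\ge\; k^k\,2^{-k}\,n^{-k}\,(bd)^{k-1}.
\end{equation*}

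Next I would extract the surplus that turns this into a bound depending only on $b$. Since $c<d$ we have $(bd)^{k-1}>(bc)^{k-1}=b^{k-1}c^{k-1}$, and dividing the displayed inequality by $c$ yields
\begin{equation*}
    e \;>\; k^k\,2^{-k}\,n^{-k}\,b^{k-1}\,c^{\,k-2}.
\end{equation*}
Now I use $c>a\ge n^3$, so that $c^{\,k-2}\ge n^{3(k-2)}$; because $k\ge 3$ we have $3(k-2)\ge k$, hence $n^{-k}c^{\,k-2}\ge 1$. Together with $k^k2^{-k}=(k/2)^k\ge 1$ (valid already for $k\ge 2$), this gives $e> b^{k-1}$, which is the assertion of the corollary.

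The argument is essentially mechanical once Lemma~\ref{gap} is available; no linear-forms-in-logarithms or sieve input enters. The only points needing care are (i) choosing the $4$-element subset $\{b,c,d,e\}$ rather than, say, $\{a,b,d,e\}$, precisely so that dividing the output inequality by $c$ leaves the extra factor $c^{\,k-2}$; and (ii) verifying that this extra factor, combined with $c\ge n^3$, exactly cancels the $n^{-k}$ loss coming from the gap principle — which is exactly where the hypothesis $k\ge 3$ (equivalently $3(k-2)\ge k$) is used, and is the step I would expect to be the only genuine obstacle.
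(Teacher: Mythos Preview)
Your argument is correct and is essentially identical to the paper's own proof: both apply Lemma~\ref{gap} to the quadruple $b<c<d<e$ to obtain $ce\ge k^k2^{-k}n^{-k}(bd)^{k-1}$, replace $d$ by $c$ on the right, divide by $c$, and then use $c\ge n^3$ together with $3(k-2)\ge k$ and $(k/2)^k\ge 1$ to absorb the $n^{-k}2^{-k}$ factor. Your write-up is in fact slightly cleaner than the paper's, which drops the $k^k$ factor midway (without it the final inequality $n^{2k-6}2^{-k}\ge 1$ would fail at $k=3$).
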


\begin{proof}
From Lemma \ref{gap},
\begin{equation*}
    ce \geq k^k 2^{-k} n^{-k} (bd)^{k-1} \geq k^k 2^{-k} n^{-k} (bc)^{k-1}. 
\end{equation*}
Therefore, $$e \geq b^{k-1} c^{k-2} n^{-k} 2^{-k} \geq b^{k-1} n^{2k-6} 2^{-k} \geq b^{k-1}.$$
\end{proof}
Using induction on the previous corollary, we deduce
\begin{corollary}\label{gap1}
    Let $k \geq 3$ and $m \geq 5$. Suppose that $n^3 \leq a_1 < a_2 < \ldots < a_m$ and the set $\{a_1, a_2, \ldots, a_m\}$ has property $D_k(-n)$. Then $a_{2+3j} \geq a_2^{(k-1)^j}$ provided $1 \leq j \leq (m-2)/3$.
\end{corollary}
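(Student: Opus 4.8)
The plan is a short induction on $j$, with the preceding corollary (the case of five elements) serving as both the base case and the engine of the inductive step. The two facts I will use without comment are that every subset of a set with property $D_k(-n)$ again has property $D_k(-n)$, and that all the $a_i$ exceed $n^3$ because $n^3 \le a_1$.

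For the base case $j=1$, I apply the previous corollary to $a_1 < a_2 < a_3 < a_4 < a_5$ (these exist since $m \ge 5$): with the identification $b = a_2$, $e = a_5$, it gives $a_5 \ge a_2^{k-1}$, which is exactly $a_{2+3\cdot 1} \ge a_2^{(k-1)^1}$. For the inductive step, suppose $1 \le j$ with $j+1 \le (m-2)/3$ and assume $a_{2+3j} \ge a_2^{(k-1)^j}$. The inequality $j+1 \le (m-2)/3$ is equivalent to $5 + 3j \le m$, so the five list-elements $a_{1+3j} < a_{2+3j} < a_{3+3j} < a_{4+3j} < a_{5+3j}$ all exist, all exceed $n^3$, and $\{a_{1+3j}, a_{2+3j}, a_{3+3j}, a_{4+3j}, a_{5+3j}\}$ has property $D_k(-n)$. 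Applying the previous corollary with $b = a_{2+3j}$ and $e = a_{5+3j} = a_{2+3(j+1)}$ yields
$$a_{2+3(j+1)} \ge a_{2+3j}^{\,k-1} \ge \left( a_2^{(k-1)^j} \right)^{k-1} = a_2^{(k-1)^{j+1}},$$
which closes the induction.

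There is no substantive obstacle: the argument is entirely mechanical once the previous corollary is in hand. The only points that need (minimal) attention are index bookkeeping — checking that the sliding window of five consecutive list-elements $a_{1+3j}, \dots, a_{5+3j}$ stays inside $\{a_1, \dots, a_m\}$, which is precisely the stated range $1 \le j \le (m-2)/3$ on the conclusion — and observing that the hypothesis $n^3 \le a_1$ automatically transfers to every element appearing in each application.
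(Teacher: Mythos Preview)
Your proof is correct and is exactly the induction the paper has in mind; the paper itself simply says ``Using induction on the previous corollary, we deduce'' and leaves the details you have written out.
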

\bigskip

\section{\bf Proof of the main theorems}
\subsection{Proof of Theorem \ref{thm-2}}

We first prove Theorem \ref{thm-2} as the proof follows a similar method as in \cite{dixit-kim-murty}.\\

Let $n$ be a positive integer and $m = M_k (-n)$ and $S = \{a_1, a_2, a_3, \cdots, a_m\}$ be a generalized $m$-tuple with the property $D_k(-n)$. Suppose $n^L < a_1 < a_2 < \cdots < a_m$ for some $L \geq 3$. Consider the system of equations

\begin{align}\label{eqns}
    &a_1 x -n = u^k\nonumber\\
    &a_2 x -n = v^k.
\end{align}
Clearly, $x=a_i$ for $i \geq 3$ are solutions to this system. Also, 
\begin{equation*}
    \left|a_2 u^k - a_1 v^k \right| = n(a_2-a_1).
\end{equation*}
Let $\alpha := (a_1/a_2)^{1/k}$ and $\zeta_k \coloneqq e^{2 \pi i/k}$. Then, we have the following two lemmas analogous to the ones proved in \cite{dixit-kim-murty}.

\begin{lemma}\label{lem1}
    Let $k \geq 3$ be odd. Suppose $u, v$ satisfy the system of equations \eqref{eqns}. Let 
    \begin{equation*}
        c(k) \coloneqq \prod_{j=1}^{(k-1)/2} \left(\sin \frac{2 \pi j}{k}\right)^2.
    \end{equation*}
    Then, for $n > 2^{1/(L-1)}c(k)^{-1/(L-1)}$,
    \begin{equation*}
        \left |\frac{u}{v} - \alpha\right| \leq \frac{a_2}{2v^k}.
    \end{equation*}
\end{lemma}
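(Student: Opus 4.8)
The plan is to convert the single algebraic relation implied by \eqref{eqns} into a rational approximation of $\alpha$ by factoring a difference of $k$-th powers over $\C$ and discarding all but the principal linear factor.

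First I would record the basic identity. Since $x = a_i \geq a_3 > a_1 > n^L > n$ (the hypothesis $n > 2^{1/(L-1)} c(k)^{-1/(L-1)}$ forces $n > 1$ because $c(k) < 1$, and $L \geq 3$), both $u^k = a_1 x - n$ and $v^k = a_2 x - n$ are positive; because $k$ is odd, $u$ and $v$ are then positive reals and $t := u/v > 0$ is real. Eliminating $x$ from \eqref{eqns} gives $a_2 u^k - a_1 v^k = -n(a_2 - a_1)$; dividing by $a_2 v^k$ and recalling $\alpha^k = a_1/a_2$ yields $t^k - \alpha^k = -n(a_2 - a_1)/(a_2 v^k)$, so that $|t^k - \alpha^k| = n(a_2 - a_1)/(a_2 v^k)$.

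Next — and this is the heart of it — I would factor $t^k - \alpha^k = \prod_{j=0}^{k-1}(t - \zeta_k^j \alpha)$ and bound below the factors with $j \neq 0$. Since $t \in \R$ and $\alpha > 0$, for $1 \leq j \leq k-1$ we have $|t - \zeta_k^j \alpha|^2 = (t - \alpha \cos(2\pi j/k))^2 + \alpha^2 \sin^2(2\pi j/k) \geq \alpha^2 \sin^2(2\pi j/k)$, and $\sin(2\pi j/k) \neq 0$ precisely because $k$ is odd. Multiplying over $j = 1, \dots, k-1$ and pairing the index $j$ with $k-j$ (a fixed-point-free involution, again using that $k$ is odd) collapses the sine product to $\prod_{j=1}^{k-1} |\sin(2\pi j/k)| = \prod_{j=1}^{(k-1)/2} \sin^2(2\pi j/k) = c(k)$, hence $\prod_{j=1}^{k-1} |t - \zeta_k^j \alpha| \geq \alpha^{k-1} c(k)$. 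Combining this with the identity above gives $\left| t - \alpha \right| \leq \dfrac{n(a_2 - a_1)}{a_2 v^k \alpha^{k-1} c(k)}$.

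Finally I would check that the right-hand side is at most $a_2/(2v^k)$. Using $\alpha^{k-1} = (a_1/a_2)^{(k-1)/k}$, this amounts to $2n(a_2 - a_1) \leq a_2^{(k+1)/k} a_1^{(k-1)/k} c(k)$; since $a_2 - a_1 < a_2$ and $a_1 < a_2$, it suffices that $2n \leq a_1 c(k)$, which follows from $a_1 > n^L$ together with the hypothesis on $n$ (equivalent to $n^L c(k) > 2n$). The only genuinely non-bookkeeping point is the sine-product evaluation and its reliance on $k$ being odd; the same parity hypothesis, together with the lower bound on $n$, is also what guarantees that $u$ and $v$ are positive real numbers, and I would flag — but not belabor — that this is where the statement's restriction to odd $k$ is used, the even-$k$ case presumably being handled separately.
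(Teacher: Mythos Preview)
The paper does not give its own proof here; it simply records that the argument is identical to \cite[Lemma 3.1]{dixit-kim-murty}. Your proof is correct and is exactly the standard argument that reference uses: from $a_2u^k-a_1v^k=-n(a_2-a_1)$ one factors $t^k-\alpha^k$ over $\C$, bounds each non-real factor below by its imaginary part $\alpha\,|\sin(2\pi j/k)|$ (nonzero for $1\le j\le k-1$ precisely because $k$ is odd), collapses the sine product to $c(k)$ via the involution $j\leftrightarrow k-j$, and then uses $a_1>n^L$ together with the hypothesis $n^{L-1}c(k)>2$ to absorb the remaining constants. One cosmetic remark: you invoke $x=a_i$ with $i\ge 3$ to get $u,v>0$, but all you actually need is that $x$ is a positive integer solution, since $a_1>n^L>n$ already gives $a_1x-n>0$; this makes the lemma self-contained rather than tied to the intended application.
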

We omit the proof of this lemma here as it is identical to the proof of \cite[Lemma 3.1]{dixit-kim-murty}.
\begin{lemma}\label{lem2}
    Let $(u_i, v_i)$ denote distinct pairs that satisfy the system of equations \eqref{eqns} with $v_{i+1} > v_i$. For $n > 2^{1/(L-1)}c(k)^{-1/(L-1)}$ and $i\geq 14$,
    \begin{equation*}
        \left|\frac{u}{v} - \alpha\right| < \frac{1}{v_i^{k-1/2}},
    \end{equation*}
    and $v_i > a_2^4$.
\end{lemma}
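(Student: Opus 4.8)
The plan is to bootstrap from Lemma \ref{lem1}, which already gives $|u_i/v_i - \alpha| \le a_2/(2v_i^k)$, by showing that once $i$ is moderately large the denominators $v_i$ grow fast enough that the factor $a_2/2$ is absorbed, turning the exponent $k$ into $k - \tfrac12$. Concretely, it suffices to prove $v_i > a_2^{2}$ for $i \ge 14$ (then $a_2/(2v_i^{k}) < v_i^{1/2}/v_i^{k} = v_i^{-(k-1/2)}$), and in fact I will aim for the stronger $v_i > a_2^{4}$ claimed in the statement. The mechanism for growth is the gap principle: the solutions $x = a_3 < a_4 < \cdots$ of the system \eqref{eqns} form (a subset of) a $D_k(-n)$-tuple together with $a_1, a_2$, so Corollary \ref{gap1} (applied to $\{a_1,a_2,a_3,\dots\}$, or rather its analog for the $a_i$ with $i \ge 3$) forces $a_{2+3j} \ge a_2^{(k-1)^j}$. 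Since $v_i^k = a_2 x_i - n$ with $x_i$ the $i$-th such solution, we have $v_i \asymp (a_2 x_i)^{1/k}$, so the super-exponential growth of the $x_i$ transfers to $v_i$.

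First I would set up the correspondence between the index $i$ labelling the pairs $(u_i, v_i)$ and the index of the corresponding element $a_{j(i)} \in S$: the pairs are ordered by $v_{i+1} > v_i$, which (since $v^k = a_2 x - n$ is increasing in $x$) is the same as ordering the solutions $x$ increasingly, and the solutions include $a_3 < a_4 < \cdots < a_m$. So the $i$-th pair corresponds to $x = a_{i+2}$ (at least; there could a priori be solutions not in $S$, but that only helps, making $v_i$ larger). Thus $v_i^k = a_2 a_{i+2} - n \ge a_2 a_{i+2}/2 \ge a_2^{(k-1)^{j}}/2$ whenever $i + 2 \ge 2 + 3j$, i.e. $j \le i/3$. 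Taking $i \ge 14$ gives $j = 4$, so $v_i^k \ge a_2^{(k-1)^4}/2 \ge a_2^{(k-1)^4 - 1}$, whence $v_i \ge a_2^{((k-1)^4-1)/k} \ge a_2^{4}$ for all $k \ge 3$ (the exponent $((k-1)^4-1)/k$ is already $\ge 80/3 > 4$ at $k = 3$ and increasing). Combining with Lemma \ref{lem1}: for $i \ge 14$,
$$
\left|\frac{u_i}{v_i} - \alpha\right| \le \frac{a_2}{2 v_i^{k}} \le \frac{v_i^{1/4}}{2 v_i^{k}} < \frac{1}{v_i^{k - 1/2}},
$$
using $a_2 \le v_i^{1/4} < 2 v_i^{1/2}$, which establishes both assertions.

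The main obstacle — and the one place needing genuine care rather than bookkeeping — is justifying the hypothesis $n^3 \le a_1$ (needed to invoke Corollary \ref{gap1}) and the ordering/indexing: the lemma is stated under $n^L < a_1$ with $L \ge 3$, so $n^3 \le a_1$ is immediate, but one must check that the sub-tuple $\{a_3, \dots, a_m\}$ — or $\{a_1,\dots,a_m\}$ — genuinely satisfies $D_k(-n)$ with the required size (at least $5$ elements, i.e. $m$ large enough that $a_{14+2}$ exists) and the chain of strict inequalities $n^3 \le a_1 < \cdots$. If $m < 16$ the statement is vacuous for $i \ge 14$, so we may assume $m \ge 16$. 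A secondary subtlety is that Corollary \ref{gap1} as stated gives $a_{2+3j} \ge a_2^{(k-1)^j}$ for the whole tuple indexed from $a_1$; one should make sure the indices line up with $x_i = a_{i+2}$, but since we only need one value ($j = 4$, $i = 14$) this is a finite check. Everything else is the elementary inequality manipulation above.
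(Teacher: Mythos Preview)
Your proof is correct and follows essentially the same route as the paper: invoke Lemma \ref{lem1}, then use the gap principle (Corollary \ref{gap1}) with $j_0 = 4$ (so that $(k-1)^{j_0} > 4k$) to force $v_i > a_2^{4}$ once $i \ge 14$, which absorbs the factor $a_2/2$ and converts the exponent $k$ into $k - \tfrac12$. One small slip worth flagging: your parenthetical claim that solutions to \eqref{eqns} outside $S$ ``only help, making $v_i$ larger'' is backwards --- interleaved extra solutions would push the $i$-th ordered $v$ down, not up, and the gap principle cannot be applied to them since $x_1 x_2 - n$ need not be a $k$-th power for two arbitrary solutions $x_1, x_2$; however, both you and the paper are tacitly restricting to the pairs arising from $x = a_3, \dots, a_m \in S$ (which is all that is needed in the application to Theorem \ref{thm-1}(a)), so this does not affect the argument.
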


\begin{proof}
    From Lemma \ref{lem1}, we have
    $\left|\frac{u_i}{v_i} - \alpha \right| < \frac{a_2}{2v_i^k}$. Thus, we need to show $a_2 < 2v_i^{1/2}$ for $i > 14$. Since $v_i^k = a_2 a_i - n$, we have $v_i \geq a_i^{1/k}$. By Corollary \ref{gap1},
    $$
        a_{2+3j} \geq a_2^{(k-1)^j}
    $$ 
    so that $v_{2+3j} \geq a_2^{(k-1)^j}$. We choose a positive integer $j_0$ such that $(k-1)^{j_0} > 4k$. Since $k \geq 3$, $j_0=4$ satisfies the condition. As $2+3j_0 = 14$, we have $v_i \geq v_{14} > a_2^4$ for all $i \geq 14$. This completes the proof. 
\end{proof}
For larger values of $k$, the number $14$ in the above Lemma can be improved to $2 + 3 j_0$, where $j_0$ satisfies the condition $(k-1)^{j_0} > 4k$.\\

Now, assume that $(u_1,v_1), (u_2, v_2), \cdots, (u_m,v_m)$ satisfy the system of equations \eqref{eqns} with 
$$
    v_i > \max(a_2^{1/k}, 2) \geq \max (H(\alpha), 2). 
$$
By Lemma \ref{lem2}, for $14 \leq i \leq m$,
\begin{equation*}
    \left|\frac{u}{v} - \alpha\right| < \frac{1}{v_i^{k-1/2}} \leq \frac{1}{v_i^{2.5}},
\end{equation*}
as $k \geq 3$. Since $\alpha=(a_1/a_2)^{1/k} < 1$ and $\max (u_i, v_i) = v_i$, from Theorem \ref{qr}, the number of such $i$'s is $\mathcal{O} (\log k \,\log \log k)$. This proves Theorem \ref{thm-2}.
\medskip

\subsection{Proof of Theorem \ref{thm-1}}

Let $m = M_k (n)$ and $S = \{a_1, a_2, a_3, \ldots a_m\}$ be a generalized $m$-tuple with the property $D_k(n)$. Suppose $|n|^L < a_1 < a_2 < \ldots < a_m$ for some $L \geq 3$. We consider the system of equations
\begin{align}\label{eqns2}
    & a_1 x +n = u^k \nonumber\\
    & a_2 x +n = v^k.
\end{align}
    
As before, $x=a_i$ for $i \geq 3$ are solutions to this system. The statements of Lemma \ref{lem1} and \ref{lem2} hold for all non-zero integers $n$. For $n>0$ this was proved in \cite{dixit-kim-murty}. 
\medskip

\begin{proof}[Proof of Theorem \ref{thm-1}(a)]

Let $(u_1,v_1), \cdots (u_m,v_m)$ satisfy the system of equations \eqref{eqns2} with $v_i > \max (a_2^{1/k}, 2) \geq \max (H(\alpha), 2)$. By Lemma \ref{lem2}, we get for $14 \leq i \leq m$,
\begin{equation*}
    \left |\frac{u_i}{v_i} - \alpha \right| \leq \frac{1}{v_i^{k-1/2}} \leq \frac{1}{v_i^{2.5}},
\end{equation*}
as $k \geq 3$. Since $\alpha = (a_1/a_2)^{1/k} < 1$ and $\max (u_i, v_i) = v_i$, applying Theorem \ref{qr} with $\kappa = 0.5$, we get that the number of $i$'s satisfying the above inequality is 
$$2^{25} (0.5)^{-3} \log (2k) \log ((0.5)^{-1} \log (2k))$$
which is
$$
    2^{28} \log (2k) \log (2 \log (2k)).
$$
So, the  total number of solutions is at most 
$$
    2^{28} \log (2k) \log (2 \log (2k)) + 14
$$ 
for $k \geq 3$.
\end{proof}
\medskip

\begin{proof}[Proof of Theorem \ref{thm-1} (b)]
Let $S = \{a_1, a_2, \ldots, a_m\}$ be a generalized Diophantine $m$-tuple with property $D_k(n)$ such that each $a_i \leq |n|^3$. We shall apply Gallagher's larger sieve with primes $p \leq Q$ satisfying $p \equiv 1 \bmod k$. Let $\mathcal{P}$ be the set of all primes $p \equiv 1\bmod k$. For all such primes $p \in \mathcal{P}$, there exists a Dirichlet character $\chi (\mod p)$ of order $k$. \\ 

Denote by $S_p$ the image of $S \pmod p$ for a given prime $p$. For $p\in \mathcal{P}$, applying Lemma \ref{vinogradov} with $\mathcal{A} = \mathcal{B} = S_p$ and $\chi \bmod p$ a character of order $k$, we obtain
\begin{equation*}
    |S_p|\left(|S_p| - 1\right) \leq \sum_{a \in S_p-\{0\}} \sum_{b \in S_p} \chi(ab+n) + |S_p| \leq \sqrt{p} |S_p| + |S_p|. 
\end{equation*}
Thus, 
$$
|S_p| \leq \sqrt{p}+2.
$$
Take $N = |n|^3$. Since $a_i \leq |n|^3$, applying Theorem \ref{larger}, we obtain
\begin{equation*}
    |S| \leq \frac{\sum\limits_{p\in \mathcal{P}, \,\, p \leq Q } \log p- \log N}{\sum\limits_{p\in \mathcal{P}, \, \, p \leq Q } \frac{\log p}{|S_p|}-\log N}.
\end{equation*}
By Theorem \ref{pnt}, 
    $$
        \sum_{\substack{p \leq Q \\ p \equiv 1 \mod k}} \log p = \frac{Q}{\phi(k)} + \mathcal{O} \left(\frac{Q}{\log Q}\right)
    $$ 
when $Q > Q_0(k)$. Using partial summation,
    $$
        \sum_{\substack{p \leq Q \\ p \equiv 1 \mod k}} \frac{\log p}{\sqrt{p}+2} = \frac{2 \sqrt{Q}}{\phi(k)} + \mathcal{O} \left(\frac{\sqrt{Q}}{\log Q}\right).
    $$
Thus,
\begin{equation}\label{eq-22}
    |S| \leq \frac{\frac{Q}{\phi(k)} + \mathcal{O}\left(\frac{Q}{\log Q}\right) - \log N}{\frac{2 \sqrt{Q}}{\phi(k)} + \mathcal{O}\left(\frac{\sqrt{Q}}{\log Q}\right) - \log N}.
\end{equation}
Choose $Q=(\phi(k) \log N)^2$. Note that the condition $Q> Q_0(k)$ is same as
\begin{equation}\label{eq-12}
     \log N  > \frac{\exp\left(0.015 \sqrt{k} (\log k)^3\right) }{ \phi(k)}.
\end{equation}
Since $k = o(\log\log |n|)$, \eqref{eq-12} holds for $N$ large enough. Now, for both the numerator and the denominator, multiply by $\phi(k)$ and divide by $\log N$, to get 
\begin{equation}\label{eq-10}
    |S| \leq \frac{\phi(k)\log N -1 + \mathcal{O} \left(\frac{Q}{\log N \log Q}\right)}{1 + \mathcal{O}\left(\frac{\sqrt{Q}}{\log N \log Q}\right)}.
\end{equation}
Because $k=o(\log\log N)$, it is easy to see that
$$
    \frac{\sqrt{Q}}{\log N \log Q} = o(1).
$$
Hence, the denominator in \eqref{eq-10} is $1 + o(1)$ and we obtain
$$
 |S| \leq \phi(k) \log N + \mathcal{O} \left(\frac{ (\phi(k))^2\log N}{\log \log N}\right).
$$
Since $N=|n|^3$,
\begin{equation*}
    M_k(n) \leq 3 \, \phi(k) \, \log |n| + \mathcal{O} \left(\frac{(\phi(k))^2 \log |n|}{\log \log |n|}\right)
\end{equation*}
as required.
\end{proof}

\section*{\bf Acknowledgements}
We thank Prof. Alain Togb\'{e} for his question regarding effective versions of the bounds in \cite{dixit-kim-murty} during the second author's talk in the Leuca 2022 conference, which motivated this paper.



\end{document}